\documentclass[11pt,letterpaper,reqno]{amsart}
\usepackage{tikz}
\usetikzlibrary{positioning, shapes.geometric, arrows.meta}
\usepackage{amssymb}
\usepackage{amsmath}
\usepackage{amsthm}
\usepackage{amsfonts}
\usepackage{bbm}
\usepackage{enumitem} 
\usepackage{pgfplots}
\pgfplotsset{compat=1.18} 
\usepackage{booktabs}

\usepackage{graphicx}
\usepackage[T1]{fontenc}
\usepackage{doi}
\usepackage{float} 
\addtolength{\hoffset}{-1.5cm}\addtolength{\textwidth}{3cm}
\addtolength{\voffset}{-1cm}\addtolength{\textheight}{2cm}

\usepackage{bookmark}
\usepackage{hyperref}
\hypersetup{pdfstartview={FitH}}

\newtheorem{thm}{Theorem}[section]
\newtheorem{lem}[thm]{Lemma}

\newtheorem{prob}[thm]{Problem}

\theoremstyle{definition}

\numberwithin{equation}{section}


\makeatother

\begin{document}

\title{An improved lower bound for Erd\H{o}s--Szekeres products}

\author[Q.~Tang]{Quanyu Tang}

\address{School of Mathematics and Statistics, Xi'an Jiaotong University, Xi'an 710049, P. R. China}
\email{tang\_quanyu@163.com}

\subjclass[2020]{Primary 30C10, 26D05.}

\keywords{Erd\H{o}s--Szekeres products, polynomials, coefficients}

\begin{abstract}
In 1959, Erd\H{o}s and Szekeres posed a series of problems concerning the size of polynomials of the form 
$$
P_n(z) = \prod_{j=1}^n (1 - z^{s_j}),
$$
where $s_1, \dots, s_n$ are positive integers. Of particular interest is the quantity 
$$
f(n) = \inf_{s_1,\dots,s_n\ge 1}  \max_{|z|=1} |P_n(z)|.
$$They proved that $\lim_{n\to\infty} f(n)^{1/n} = 1$, and also established the classical lower bound $f(n) \ge \sqrt{2n}$. However, despite extensive effort over more than six decades, no stronger general lower bound had been established. 

In this paper, we obtain the new bound
$$
f(n) \ge 2\sqrt{n}.
$$This gives the first improvement of the classical lower bound for the Erd\H{o}s--Szekeres problem in the general case since 1959. In particular, our result confirms a remark of Billsborough et al.~\cite{BFHKLPS21}, who observed that if the original Erd\H{o}s--Szekeres proof could be fixed, the O’Hara--Rodriguez bound would yield exactly this inequality.
\end{abstract}

\maketitle

\section{Introduction}

In their short but influential 1959 paper~\cite{ErdosSzekeres1959}, Erd\H{o}s and Szekeres posed a number of 
problems about the growth or decay of ``pure power products''
\begin{equation}\label{eq:ESproduct}
P_n(z) = \prod_{j=1}^n (1 - z^{s_j})=\sum_{k=1}^N a_k z^k,
\end{equation}
and their norms $\| P_n \|_{L^\infty(|z|=1)}$, where $s_1,\dots,s_n$ are positive integers. Here $\|P_n\|_{L^\infty(|z|=1)}$ denotes the maximum modulus of $P_n$ on the unit circle, namely $\|P_n\|_{L^\infty(|z|=1)} = \max_{|z|=1} |P_n(z)|$. For an excellent survey of these problems and related developments, see Lubinsky \cite{LubinskySurvey}. 

Among the many problems raised by Erd\H{o}s and Szekeres, the most central one is the following, 
as highlighted in \cite[Problem~1]{LubinskySurvey} and also listed as Problem~\#256 on Bloom's Erd\H{o}s Problems website \cite{TFB}:

\begin{prob}\label{problem1}
Define
\[
f(n) := \inf_{s_1,\dots,s_n \ge 1} \; \| P_n \|_{L^\infty(|z|=1)}.
\]
Determine the growth of $f(n)$ as $n \to \infty$.    
\end{prob}

Erd\H{o}s and Szekeres~\cite{ErdosSzekeres1959} showed that $\lim_{n\to\infty} f(n)^{1/n} = 1$, and suggested that $f(n)$ should grow faster than any power of $n$ \cite{Erdos1964}. Later progress includes upper bounds by Atkinson \cite{Atkinson1961}, Odlyzko \cite{Odlyzko1982}, Kolountzakis \cite{Kolountzakis1994}, and the deep result of Belov and Konyagin \cite{BelovKonyagin1996} giving
\[
\log f(n) = O\left((\log n)^4\right).
\]

On the lower bound side, the classical result $f(n) \ge \sqrt{2n}$ goes back to Erd\H{o}s and Szekeres~\cite{ErdosSzekeres1959}. 
Although their original proof contained a gap, this was later repaired in~\cite{BFHKLPS21} 
(see also Section~\ref{sec:classical} for details). There have also been several results that treat the sequence $\{s_j\}$ under additional restrictions; see \cite[pp.~300--302]{LubinskySurvey} for a detailed account, of which we give only a brief summary here. 
Borwein~\cite{Borwein1993} obtained a new lower bound in the case when none of the $\{s_j\}$ are divisible by a given prime $p$. 
Bell, Borwein, and Richmond~\cite{BBR1998} obtained both asymptotic upper and lower bounds. 
Bourgain and Chang~\cite{BC18}, using probabilistic methods and careful estimation of trigonometric sums, established new results for “dense” sets $S = \{s_1 < \cdots < s_n\}$. 
In 2021, Billsborough et al.~\cite[Theorem~2.1]{BFHKLPS21} obtained new lower bounds using the Poisson integral of $\log|P_n|$, 
which are effective when the $\{s_j\}$ do not grow too quickly. 
Another result in the same paper, \cite[Theorem~2.2]{BFHKLPS21}, proved using Kellogg’s extension of the Hausdorff--Young 
inequalities for Fourier coefficients, gives good bounds when the $\{s_j\}$ grow rapidly or are well separated. 
Nevertheless, as emphasized in \cite[p.~300]{LubinskySurvey} (see also \cite[p.~253]{BC18} or \cite{BFHKLPS21}), there has been no improvement on the lower bound $f(n)\ge \sqrt{2n}$ in general.


Our main result is the following theorem:

\begin{thm}\label{thm:main}
For all $n \in \mathbb{N}$,
\[
f(n) \;\ge\; 2\sqrt{n}.
\]
\end{thm}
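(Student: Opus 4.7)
The plan is to combine two estimates that together yield the theorem: a corrected version of the original 1959 Erd\H{o}s--Szekeres $L^2$ bound and the O'Hara--Rodriguez inequality, as foreshadowed in \cite{BFHKLPS21}. Starting from Parseval's identity $\|P_n\|_\infty^2 \ge \|P_n\|_2^2 = \sum_{k=0}^{N} a_k^2$ with $N = s_1+\cdots+s_n$, the theorem reduces to showing $\|P_n\|_\infty^2 \ge 4n$. I plan to split this into two multiplicative pieces,
\[
\sum_{k=0}^{N} a_k^2 \;\ge\; 2n \qquad \text{and} \qquad \|P_n\|_\infty^2 \;\ge\; 2\,\|P_n\|_2^2,
\]
the first being a fixed form of the 1959 bound and the second essentially the O'Hara--Rodriguez inequality.

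For the first estimate, I will use the combinatorial expansion $a_k = \sum_{S \subseteq [n],\,\sigma(S) = k}(-1)^{|S|}$, where $\sigma(S) = \sum_{j \in S} s_j$, together with the palindromic relation $a_k = (-1)^n a_{N-k}$. The original Erd\H{o}s--Szekeres argument, as noted in \cite{BFHKLPS21}, implicitly assumes that the $s_j$ are all distinct and that subset sums never coincide. I plan to repair it via a weighted counting that groups subsets by their multiset of summands and matches each singleton $\{j\}$ against suitable ``correcting'' larger subsets, so that every singleton still contributes its full $1$ to $\sum a_k^2$ even in the presence of multiplicities and accidental sum collisions.

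For the second estimate, I will exploit the product form $|P_n(e^{i\theta})|^2 = 4^n \prod_j \sin^2(s_j\theta/2)$, a non-negative trigonometric polynomial with a zero of order $2n$ at $\theta = 0$. The O'Hara--Rodriguez bound supplies exactly the sought ratio $\max_\theta |P_n(e^{i\theta})|^2 \ge 2 \cdot \frac{1}{2\pi}\int_0^{2\pi}|P_n(e^{i\theta})|^2\,d\theta$ for such expressions, and I plan to invoke it essentially as a black box.

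The main obstacle is the Erd\H{o}s--Szekeres repair: the delicate combinatorial bookkeeping is precisely what has resisted six decades of effort, and the matching argument must avoid losing any of the expected $2n$ when multiplicities collide or when $\sigma(S) = \sigma(S')$ for distinct subsets. Once the $L^2$ bound $\sum a_k^2 \ge 2n$ is secured, the O'Hara--Rodriguez doubling, as explicitly anticipated in \cite{BFHKLPS21}, upgrades $\sqrt{2n}$ to $2\sqrt{n}$ and completes the proof.
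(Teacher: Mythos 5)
Your overall architecture matches the paper exactly: establish $\sum_k |a_k|^2 \ge 2n$ and then apply the O'Hara--Rodriguez inequality $\|P_n\|_{L^\infty(|z|=1)}^2 \ge 2\sum_k|a_k|^2$ to get $4n$, hence $f(n)\ge 2\sqrt{n}$. The second half is fine and is invoked the same way the paper invokes it. But the first half --- the only genuinely hard step --- is not proved in your proposal. You describe a plan to ``repair'' the original 1959 subset-sum counting by grouping subsets by their multiset of summands and matching each singleton against correcting larger subsets, and you yourself concede that this bookkeeping ``is precisely what has resisted six decades of effort.'' That is a gap, not a proof: you give no mechanism guaranteeing that cancellations among colliding subset sums cannot destroy the count, and even at face value the singleton-matching idea only accounts for $n$ contributions of size $1$, not the needed $2n$. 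As written, the proposal does not establish $\sum_k a_k^2 \ge 2n$.

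The paper closes this gap by abandoning the subset-sum combinatorics entirely. It uses only the divisibility $(1-z)^n \mid P_n(z)$, which is equivalent to the vanishing of the power moments $\sum_k a_k k^r = 0$ for $r=0,1,\dots,n-1$. Writing the positive and negative coefficients as integer multisets $X$ and $Y$ (each index $k$ appearing with multiplicity $|a_k|$), these moment conditions say the first $n-1$ power sums of $X$ and $Y$ agree; by Newton's identities (the paper's Lemma~3.1), if $m=|X|=|Y|\le n-1$ then $X=Y$ and $p\equiv 0$, a contradiction. Hence $m\ge n$, so $\sum_k |a_k| = 2m \ge 2n$, and since the $a_k$ are integers, $\sum_k |a_k|^2 \ge \sum_k |a_k| \ge 2n$. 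If you want to salvage your write-up, replace your combinatorial matching step with an argument of this type (or supply the missing matching argument in full detail, which would be a substantial and risky undertaking).
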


Thus we provide the first improvement of the classical lower bound in the Erd\H{o}s--Szekeres problem that holds in the general case since 1959.

The rest of this paper is organized as follows. 
In Section~\ref{sec:classical}, we revisit the classical lower bound $f(n)\ge \sqrt{2n}$. 
This section not only reviews the original argument and its correction in later works, 
but also provides the motivation for our proof of Theorem~\ref{thm:main}. 
In Section~\ref{sec:main_result}, we establish Theorem~\ref{thm:L2-lower-bound}, 
which may be of independent interest since it depends only on the divisibility condition $(1-z)^n \mid p(z)$. 
Finally, we prove our main result, Theorem~\ref{thm:main}.


\section{The classical lower bound}\label{sec:classical}

In this section we briefly recall the classical proof of the lower bound
\[
f(n)\ge\sqrt{2n},
\]following the exposition in \cite[Section~3]{BFHKLPS21}. This will make it clear how our new approach strengthens the existing argument.

\subsection{The original Erd\H{o}s--Szekeres idea}
Erd\H{o}s and Szekeres~\cite{ErdosSzekeres1959} considered the identity
\begin{equation}\label{eq:ESidentity}
\sum_{j=1}^r z^{a_j} - \sum_{j=1}^r z^{b_j} \;=\; \prod_{j=1}^n (1-z^{s_j}),
\end{equation}
where $\{a_j\}$ and $\{b_j\}$ are disjoint sets of integers. In this formulation all nonzero coefficients on the left-hand side are $\pm 1$. Since the right-hand side of~\eqref{eq:ESidentity} has a zero of multiplicity $n$ at $z=1$, differentiating the left-hand side of~\eqref{eq:ESidentity} $k$ times and then setting $z=1$ gives
\[
\sum_{j=1}^r a_j^k \;=\; \sum_{j=1}^r b_j^k, \qquad k=0,1,\dots,n-1.
\]
By the classical Prouhet--Tarry--Escott problem~\cite{BI94}, this forces $r\ge n$. Hence there are at least $2n$ nonzero coefficients in $P_n(z)$, and therefore\[
\sum_{k=0}^N |a_k|^2 \;\ge\; 2n.
\]Finally, applying Parseval’s identity we obtain
\[
\| P_n \|_{L^\infty(|z|=1)}^2 \geq \frac{1}{2\pi}\int_{-\pi}^{\pi} |P_n(e^{it})|^2\,dt
= \sum_{k=0}^N |a_k|^2 \ge 2n,
\]which in turn implies $\| P_n \|_{L^\infty(|z|=1)} \ge \sqrt{2n}$. Thus one arrives at the classical bound $f(n) \ge \sqrt{2n}$.

\subsection{Problems with the argument}
However, as observed in \cite{BFHKLPS21}, the identity \eqref{eq:ESidentity} does not hold for general $n$, since the coefficients of $\prod_{j=1}^n (1-z^{s_j})$ need not all be $\pm1$. 
For example,
\[
(1-z)^n = \sum_{k=0}^n (-1)^k \binom{n}{k} z^k
\]
has only $n+1$ nonzero coefficients, not $2n$. 
Thus the conclusion that there are always at least $2n$ nonzero coefficients is not justified in this form.

\subsection{A corrected proof}
A rigorous proof was later given in \cite[Section~3]{BFHKLPS21}. Since $(1-z)^n \mid P_n(z)$, we have
\[
P_n^{(j)}(1)=0, \qquad j=0,1,\dots,n-1.
\]Equivalently,\[
\sum_{k=0}^N a_k (k)_j = 0, \qquad j=0,1,\dots,n-1.
\]This implies that\[
\sum_{k=0}^N a_k S(k)=0
\]for every polynomial $S(x)$ of degree at most $n-1$. As $P_n$ is not the zero polynomial, this forces at least $n$ coefficients $a_k$ to be nonzero.\footnote{In fact, with a more refined argument one can show that $P_n$ has at least $n+1$ nonzero coefficients.} Hence \(\sum_{k=0}^N |a_k|^2 \ge n\). Finally, by a result of O’Hara and Rodriguez~\cite[Corollary~1]{OR74}, we have
\[
\|P_n\|_{L^\infty(|z|=1)}^2 \ge 2\sum_{k=0}^N |a_k|^2 \ge 2n,
\]and therefore \(f(n) \ge \sqrt{2n}\).

This is the classical lower bound, rigorously justified in \cite{BFHKLPS21}. Interestingly, in \cite{BFHKLPS21} (see also \cite[p.~298]{LubinskySurvey}) the authors further remarked:
\begin{quote}
\emph{If the original Erd\H{o}s--Szekeres proof could be fixed, the O’Hara--Rodriguez bound would give $f(n)\ge 2\sqrt{n}$.}
\end{quote}Our present work may be viewed as providing precisely such a fix, thereby confirming their remark. Indeed, the proof of Theorem~\ref{thm:main} also makes use of the O’Hara--Rodriguez bound. The key new ingredient, however, is that in Theorem~\ref{thm:L2-lower-bound} we prove the stronger estimate
\[
\sum_{k=0}^N |a_k|^2 \ge 2n,
\]
which in turn yields the improved inequality $f(n)\ge 2\sqrt{n}$.

\section{Proof of the Main Result}\label{sec:main_result}
We first prove a structural lemma concerning equality of power sums, based on Newton’s identities. Here by an \emph{integer multiset}\footnote{A \emph{multiset} (also called a \emph{bag}) is like a set, except that elements are allowed to appear with multiplicities. For instance, $\{1,1,2,3\}$ is a multiset that differs from the set $\{1,2,3\}$.} we mean a collection of integers in which elements may appear with multiplicity.

\begin{lem}\label{lem:PS-unique}
Let $X=\{x_1,\dots,x_m\}$ and $Y=\{y_1,\dots,y_m\}$ be integer multisets. 
If
\[
\sum_{i=1}^m x_i^r \;=\; \sum_{i=1}^m y_i^r \qquad\text{for } r=1,\dots,m,
\]
then $X$ and $Y$ are identical as multisets.
\end{lem}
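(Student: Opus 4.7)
\medskip

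The plan is to recover the multiset from its power sums via Newton's identities. Form the two monic polynomials
\[
P_X(t) = \prod_{i=1}^m (t-x_i) = t^m - e_1(X)\,t^{m-1} + e_2(X)\,t^{m-2} - \cdots + (-1)^m e_m(X),
\]
and analogously $P_Y(t)$, where $e_k(X)$ and $e_k(Y)$ denote the $k$th elementary symmetric functions of the two multisets. Let $p_r(X) = \sum_i x_i^r$ and $p_r(Y) = \sum_i y_i^r$. The hypothesis asserts that $p_r(X) = p_r(Y)$ for $r = 1,\dots,m$.

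Next I invoke Newton's identities, which state that for $1 \le k \le m$,
\[
k\,e_k \;=\; \sum_{i=1}^{k} (-1)^{i-1} e_{k-i}\, p_i,
\]
with the convention $e_0 = 1$. Since the leading coefficient on the left is $k \ne 0$ and we are working over a field of characteristic zero (the identities hold in $\mathbb{Q}$), this is a valid recursion that expresses $e_k$ as a polynomial in $p_1, \dots, p_k$ with rational coefficients. Proceeding by induction on $k$, from the equalities $p_r(X) = p_r(Y)$ for $r = 1,\dots,m$ I get $e_k(X) = e_k(Y)$ for $k = 1,\dots,m$.

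Consequently $P_X(t) = P_Y(t)$ as polynomials in $\mathbb{Q}[t]$. Two monic polynomials that are equal have the same roots with the same multiplicities, so the multisets of roots $\{x_1,\dots,x_m\}$ and $\{y_1,\dots,y_m\}$ coincide, which is the desired conclusion.

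There is no serious obstacle: the only point requiring care is that Newton's recursion involves division by $k$, so the argument must take place over $\mathbb{Q}$ (or $\mathbb{R}$) rather than $\mathbb{Z}$. This is harmless, as the integer multisets are recovered at the end as the roots of a polynomial with integer (in fact equal) coefficients. I note that exactly $m$ power sums are used, matching the number of unknowns $e_1,\dots,e_m$; having fewer than $m$ power sums would not suffice, as is well known from the Prouhet--Tarry--Escott problem cited in Section~\ref{sec:classical}.
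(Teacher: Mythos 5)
Your argument is correct and follows essentially the same route as the paper: both use Newton's identities to show that the first $m$ power sums determine $e_1,\dots,e_m$, hence the monic polynomials $P_X$ and $P_Y$ coincide, hence the multisets agree. Your added remark about working over $\mathbb{Q}$ because of the division by $k$ is a fair point of care, but it does not change the substance of the argument.
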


\begin{proof}
Let $p_r(Z):=\sum_{z\in Z} z^r$ be the $r$th power sum and 
$e_j(Z)$ the $j$th elementary symmetric polynomial in the elements of $Z$ ($e_0(Z):=1$).
For any $m$-tuple $Z$, the Newton's identities assert that for $r=1,\dots,m$,
\[
p_r(Z) - e_1(Z)p_{r-1}(Z) + e_2(Z)p_{r-2}(Z) - \cdots 
+ (-1)^{r-1} e_{r-1}(Z)p_1(Z) + (-1)^r r\, e_r(Z) = 0,
\]
and in particular give the recursion
\[
e_r(Z)=\frac{1}{r}\left(p_r(Z) - e_1(Z)p_{r-1}(Z) + \cdots 
+ (-1)^{r-1} e_{r-1}(Z)p_1(Z)\right), \qquad r=1,\dots,m.
\]
Hence the vector $(p_1(Z),\dots,p_m(Z))$ uniquely determines $(e_1(Z),\dots,e_m(Z))$.

Applying this to $Z=X$ and $Z=Y$ and using the hypothesis $p_r(X)=p_r(Y)$ for $r=1,\dots,m$, 
we obtain $e_j(X)=e_j(Y)$ for all $j=1,\dots,m$. 
Therefore the associated monic polynomials
\[
P_X(t):=\prod_{x\in X}(t-x)=t^m-e_1(X)t^{m-1}+\cdots+(-1)^m e_m(X),
\]
and $P_Y(t)$ have the same coefficients, so $P_X\equiv P_Y$. 
Thus $X$ and $Y$ have the same roots with multiplicities, i.e., they are identical as multisets.
\end{proof}

\begin{thm}\label{thm:L2-lower-bound}
Let $n\in\mathbb{N}$ and let \(p(z)=\sum_{k=1}^N a_k z^k \in \mathbb{Z}[z]\) be a nonzero polynomial such that $(1-z)^n$ divides $p(z)$. Then
\[
\sum_{k=1}^N |a_k|^2 \ge 2n.
\]
\end{thm}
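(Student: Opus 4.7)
The goal is to sharpen the "at least $n$ nonzero coefficients" argument from Section~\ref{sec:classical} to a lower bound of $2n$ on $\sum |a_k|^2$. The key idea is to exploit the fact that the coefficients are \emph{integers} by grouping them with multiplicity and then applying Lemma~\ref{lem:PS-unique}. Since $(1-z)^n\mid p(z)$, the divisibility condition gives
\[
\sum_{k} a_k\, S(k) = 0
\]
for every polynomial $S$ of degree at most $n-1$; equivalently, $\sum_{k} a_k k^r = 0$ for $r=0,1,\dots,n-1$.

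Next I would split the indices by sign. Let $A=\{k:a_k>0\}$ and $B=\{k:a_k<0\}$, and form integer multisets $X$ and $Y$ in which each $k\in A$ appears with multiplicity $a_k$ and each $k\in B$ appears with multiplicity $-a_k$. The moment identities above translate into the power-sum equalities
\[
\sum_{x\in X}x^{r} \;=\; \sum_{y\in Y} y^{r}, \qquad r=0,1,\dots,n-1.
\]
The $r=0$ case forces $|X|=|Y|=:m$, while crucially $X$ and $Y$ are disjoint as sets of integers (positive- and negative-coefficient indices cannot overlap). Since $p\neq 0$, we have $m\ge 1$, so both $X$ and $Y$ are nonempty.

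Now I would argue $m\ge n$ by contradiction. Suppose $m\le n-1$. Then the power-sum equalities hold in particular for $r=1,\dots,m$, so Lemma~\ref{lem:PS-unique} forces $X=Y$ as multisets. But $X$ and $Y$ are disjoint and nonempty, a contradiction. Hence $m\ge n$.

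Finally, since each nonzero $a_k$ is an integer, $|a_k|^2\ge |a_k|$, so
\[
\sum_{k} |a_k|^2 \;\ge\; \sum_{k} |a_k| \;=\; \sum_{k\in A} a_k + \sum_{k\in B}(-a_k) \;=\; |X|+|Y| \;=\; 2m \;\ge\; 2n,
\]
which is the desired bound. The main subtlety to get right is the disjointness of $X$ and $Y$ (so that the equality $X=Y$ from Lemma~\ref{lem:PS-unique} really is a contradiction rather than a tautology), together with the matching of the number of available power-sum identities ($n-1$) to the hypothesis of the lemma ($m$ identities for multisets of size $m$); these align precisely under the assumption $m\le n-1$, which is what makes the bound $m\ge n$, and therefore $2n$, come out sharp.
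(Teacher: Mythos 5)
Your proposal is correct and follows essentially the same route as the paper's own proof: the same reduction of the divisibility condition to vanishing power moments, the same sign-split multisets $X$ and $Y$ with coefficient multiplicities, the same appeal to Lemma~\ref{lem:PS-unique} to force $m\ge n$, and the same integrality step $|a_k|^2\ge|a_k|$. Your explicit remark that $X$ and $Y$ are disjoint (so $X=Y$ yields a genuine contradiction) is a slightly cleaner phrasing of the paper's ``hence all $a_k=0$'' but is not a different argument.
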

\begin{proof}
Let 
\[
p(z)=\sum_{k=1}^N a_k z^k \in \mathbb{Z}[z]
\]
be a nonzero polynomial. By definition, $(1-z)^n \mid p(z)$ iff $p^{(r)}(1)=0$ for $r=0,1,\dots,n-1$. Since
\[
p^{(r)}(z)=\sum_{k=1}^N a_k \frac{d^r}{dz^r}z^k
=\sum_{k=1}^N a_k (k)_r z^{k-r},
\]
we obtain, upon evaluating at $z=1$,
\[
p^{(r)}(1)=\sum_{k=1}^N a_k (k)_r=0, \qquad r=0,1,\dots,n-1.
\] Since the families $\{k^r\}_{r=0}^{n-1}$ and $\{(k)_r\}_{r=0}^{n-1}$ span the same space, the simultaneous vanishing of the factorial moments $\sum_k a_k (k)_r = 0$ for $r=0,1,\dots,n-1$ is equivalent to the simultaneous vanishing of the power moments $\sum_k a_k k^r = 0$ for $r=0,1,\dots,n-1$. Therefore, we have\begin{equation}\label{eq:moment}
\sum_{k=1}^N a_k k^r = 0, \qquad r=0,1,\dots,n-1.
\end{equation}

To analyze these relations, we split the coefficients into their positive and negative parts. For each $k$, if $a_k>0$ insert $k$ into a multiset $X$ with multiplicity $a_k$; if $a_k<0$ insert $k$ into a multiset $Y$ with multiplicity $|a_k|$. Formally,
\[
X:=\{\,\underbrace{k,\dots,k}_{a_k \text{ times}} : a_k>0\,\}, 
\qquad
Y:=\{\,\underbrace{k,\dots,k}_{|a_k| \text{ times}} : a_k<0\,\}.
\]
Thus every element of $X$ or $Y$ is an index of $p(z)$, with multiplicity determined by the absolute value of the corresponding coefficient. With this notation, Equation \eqref{eq:moment} can now be rewritten as
\[
\sum_{x\in X} x^r = \sum_{y\in Y} y^r, \qquad r=0,1,\dots,n-1.
\]Let $m=|X|=|Y|$. If $m\le n-1$, then the power sums agree for $r=1,\dots,m$. By Lemma~\ref{lem:PS-unique}, this forces $X=Y$, hence all $a_k=0$, contradicting that $p\neq 0$. Therefore,
\begin{equation}\label{eq:m-lower}
m \ge n.
\end{equation}

Finally, notice that
\[
\sum_{k=1}^N |a_k| = |X|+|Y| = 2m \ge 2n \qquad \text{(by \eqref{eq:m-lower})}.
\]
Moreover, for each nonzero integer $a_k$, we clearly have $|a_k|^2 \ge |a_k|$.  
Summing over all $k$ gives
\[
\sum_{k=1}^N |a_k|^2 \ge \sum_{k=1}^N |a_k| \ge 2n.
\]This completes the proof.
\end{proof}

Before proving Theorem~\ref{thm:main}, we make some remarks on Theorem~\ref{thm:L2-lower-bound}. 
In fact, Theorem~\ref{thm:L2-lower-bound} establishes a stronger conclusion: 
for any nonzero integer polynomial $p(z)=\sum_{k} a_k z^k$ divisible by $(1-z)^n$, one has \(\sum_{k} |a_k|\ge 2n\). This can be viewed as an estimate for the $\ell^1$ norm of the coefficients, \(\|p\|_1 := \sum_k |a_k|\).

For the polynomials of the form \eqref{eq:ESproduct}, Maltby~\cite{Maltby971, Maltby972} obtained some improvements for specific values of $n$, 
focusing on the $\ell^1$ norm of the coefficients and employing algorithms to search for optimal choices of $\{s_j\}$. 
It is worth noting that in \cite[p.~1323]{Maltby972} Maltby pointed out the inequality $\|P_n\|_1 \ge 2n$, 
referring to \cite{BI94} for a proof. However, this statement is limited in scope: just as in the gap in Erd\H{o}s and Szekeres~\cite{ErdosSzekeres1959}, \cite[Section~4.2]{BI94} applied the Prouhet--Tarry--Escott problem, which is only valid for the special form (in \cite[Section~4.2]{BI94} also referred to as a pure product)
\[
\sum_{i=1}^N z^{a_i} - \sum_{i=1}^N z^{\beta_i},
\]so that all nonzero coefficients are $\pm 1$. In general this is not the case, since the coefficients of $P_n(z)$ need not be restricted to $\pm1$. 
Thus, prior to the present paper, neither Theorem~\ref{thm:L2-lower-bound} nor Theorem~\ref{thm:main} had been established.

We are now ready to prove Theorem~\ref{thm:main}. 
\begin{proof}[Proof of Theorem~\ref{thm:main}]
Let
\[
P_n(z) = \prod_{j=1}^n (1 - z^{s_j}) = \sum_{k=1}^N a_k z^k, \qquad s_j \in \mathbb{N}.
\]
Clearly $(1-z)^n \mid P_n(z)$. By Theorem~\ref{thm:L2-lower-bound}, we have
\[
\sum_{k=1}^N |a_k|^2 \ge 2n.
\]
Moreover, by \cite[Corollary~1]{OR74}, one knows that
\[
\| P_n \|_{L^\infty(|z|=1)}^2 \ge 2\sum_{k=1}^N |a_k|^2 \ge 4n,
\]since all zeros of $P_n$ lie on the unit circle. Hence
\[
\| P_n \|_{L^\infty(|z|=1)} \ge 2\sqrt{n}.
\]
Taking the infimum over all choices of $s_j$ yields the desired bound
\[
f(n)\ge 2\sqrt{n}.
\qedhere\]
\end{proof}

\section*{Acknowledgements}
The author is grateful to Tao Hu, Jiachen Shi, Binyan Shi, Hao Zhang, and Jianxiang Zhou for valuable discussions. The author also thanks Thomas Bloom for founding and maintaining the Erd\H{o}s Problems website, which has been a valuable resource and inspiration for this work.



\end{document}